\newtheorem{theorem}{Theorem}[section]
\newtheorem{proposition}[theorem]{Proposition}
\newtheorem{definition}[theorem]{Definition}
			  \def\X{\mathcal{X}}
				\def\H{\mathcal{H}}
				\def\S{\mathcal{S}_E}
				\def\N{\mathcal{N}}
			  \def\B{\mathcal{B}}
			  \def\p1{P_1}
			  \def\p2{P_2}
\title{An exact method for optimizing two linear fractional functions over the efficient set of a Multiobjective Integer Linear Fractional Program }
\author{
 Yacine Chaiblaine \\
   USTHB, LaROMaD Laboratory \\ 
  Bp 32 El Alia, 16111, Algeria \\ 
  \texttt{ychaiblaine@usthb.dz} 
   \And 
   Mustapha  Moula\"{i} \\
  USTHB, LaROMaD Laboratory \\ 
  Bp 32 El Alia, 16111, Algeria \\
  \texttt{mmoulai@usthb.dz} \\
  \And
   Yasmine Cherfaoui \\
  USTHB, LaROMaD Laboratory\\ 
  Bp 32 El Alia, 16111, Algeria \\
  \texttt{ycherfaoui@usthb.dz} 
}
\begin{document}
\maketitle
\begin{abstract}
In this paper, an exact method is proposed to optimize two fractional linear functions over the efficient set of a fractional multiobjective linear problem $(MOILFP)$. This type of problems is encountered when there are two decision makers and each has his own utility function that he wants to optimize over the efficient set of multiobjective problem. The proposed method uses Branch and Bound method combined with a cutting plane technique to find the efficient solutions for both utility functions and $(MOILFP)$ without going through all the efficient solutions of the two problems. An illustrative example and a computational study are reported.
\end{abstract}

\keywords{Multiobjective programming \and Fractional programming\and Integer programming\and Branch-and-cut\and Nonlinear programming}

\section{Introduction}

Optimizing a nonlinear or linear function over the efficient set is an interesting area of multiobjective programming \citep{Miettinen99}. It is a simple way to avoid listing all efficient solutions by evaluating and distinguishing them from each other using a function that summarizes the preferences of decision makers.

First considered by Philip \citeyearpar{philip72}, the problem of optimization over the efficient set has since attracted the attention of several authors, among whom Jorge \citeyearpar{jorge09} who proposed an exact method that successively solves single-objective programs, Zerdani and Moula\"{i}  \citeyearpar{Zerdani11} have optimized a linear function over the integer efficient set of $MOILFP$ using Ehrgott's efficiency test \citeyearpar{Ehrgott97}. More recently, Drici et al. \citeyearpar{DOM18} also proposed an exact method that uses the well-known concept of branch-and-bound combined with efficient cuts technique and Liu and Ergoth  \citeyearpar{LE18} have presented primal and dual algorithms to solve this problem. Other entries can be found in Yamamoto's survey \citeyearpar{yamamoto02}.

Most of the time, in this type of problems, we have several decision makers and each one of them can have a utility function. Cherfaoui and Moula\"{\i} \citeyearpar{CM19} have treated the case where there are two linear functions to optimize over the efficient set of an integer linear multiobjective problem. However, in practice, measuring a quality, a profitability, a probability..., are described as  fractional functions. Fractional programming is encountered in several areas of application such as : stock cutting problem \cite{stockcutt}, shape-quality optimization \cite{shapequality}, clustering problems \cite{cluster}, etc., more applications of fractional programming can be found in \cite{sheibelsurvy}, Stancu-Minasian's bibliography \citeyearpar{stancu19}.

In this article, we generalize the latter method \citep{CM19} where we have two linear fractional functions $(BOILFP)$ that we want to optimize over the efficient set of $(MOILFP)$. This problem was encountered in a real world problem summarized as follow: A company "A" wants to subcontract a part of its production. In the specifications, company A has two criteria: optimizing the profitability and the quality, each criterion is represented by a linear fractional function. On another hand, a Company "B" wants to take over the project and at the same time has several fractional linear objective functions. The company "B" wants to know if there is one or more solutions to benefit both companies. In other words, finding the solutions that are efficient for both the $(BIOLFP)$ problem and the $(MOILFP)$ problem. One way to solve this problem is to enumerate the  efficient sets of both $(BIOLFP)$ and $(MOILFP)$ and find the intersection between the two efficient sets. This method has an important computational cost as we will show it in the computational study.

The proposed method is an exact branch and cut procedure that provides the exact  efficient solutions for both  $(BIOLFP)$ and $(MOILFP)$. This procedure has the advantage to avoid useless exploration of domain by using cuts that eliminates dominated solutions, pruning by the same way useless nodes in the tree. It also uses efficiency tests that allow only the good solutions to be selected.
\section{Definitions and preliminaries}

Linear Fractional Programs $(LFP)$ are very important because of their contribution to real problems \citep{stancu12,baja13}, their general formulation is :

\begin{equation}
	\label{lfp}
	(LFP)\left\{\begin{array}{lll}
	\max f_1(x)=\dfrac{p^1x+\alpha^1}{q^1x+\beta^1} \\\
	s.t.\\
	x\in \mathcal{X}
	\end{array}\right.
\end{equation}

where  $\mathcal{X} = \{x \in \mathbb{R}^n|Ax \leq b,\ x \geq 0\}$, $p^1$, $q^1$ are $n-$vectors, $\alpha^1$ and $\beta^1$ are scalars, $b$ is an $m-$vector, $A$ is an $n\times m-$matrix and $x$ is the  $n-$vector decision.

The set $\mathcal{X}$ is assumed to be  nonempty convex polytope and the set $\mathcal{D}=\mathcal{X}\cap\mathbb{Z}^n$ is assumed not empty as well.

Several authors proposed approaches that can solve $(LFP)$ efficiently. In the first place, Charnes and Cooper \citeyearpar{CC62} proposed a  variable transformation that turns the problem into a linear program.
Cambini and Martein \citeyearpar{ CM86} proposed a simplex procedure improved from the procedure of Martos \citeyearpar{martos64}. This procedure uses the reduced gradient to move towards the optimum for a base $\mathcal{B}_l$ and associated solution $x^{*(l)}$ using the following notations:

\[\begin{array}{lll}
\nu^{1(l)}= p^{1} - p^{1}_{\mathcal{B}_l}\mathcal{B}_{l}^{-1} A \\
\mu^{1(l)}=q^{1} - q^{1}_{\mathcal{B}_l}\mathcal{B}_{l}^{-1} A, \\
P^1(x)=p^1x^{*(l)}+\alpha^1\\
Q^1(x)=q^1x^{*(l)}+\beta^1\\
\end{array}
\]

the reduced gradient for the problem (\ref{lfp})  can be formulated as:

\[\boldsymbol{\gamma^{1(l)}}= Q^1(x)\nu^{1(l)}- P^1(x)\mu^{1(l)}\]

The reduced gradient gives the direction of growth of the objective function $f_1(x)$, so if for all index $j$  from the non-basic index set $\mathcal{N}_l$, $\boldsymbol{\gamma^{1(l)}_j} \leq 0$ then the function $f_1(x)$ cannot increase.

Most of real life problems can be modelled as multiobjective problems with integer numbers, therefore let us consider the following multiobjective program:

\begin{equation}
	\label{moilfp}
	(MOILFP)\left\{\begin{array}{lll}
	\max Z_1(x)=\cfrac{c^1x+c^1_0}{d^1x+d^1_0}\\
	\noalign{\smallskip}
	\max Z_2(x)=\cfrac{c^2x+c^2_0}{d^2x+d^2_0}\\\noalign{\smallskip}
	\vdots \\
	\max Z_k(x)=\cfrac{c^kx+c^k_0}{d^kx+d^k_0}\\\noalign{\smallskip}
	s.t.\\\noalign{\smallskip}
	x\in \mathcal{D}
	\end{array}\right.
\end{equation}

where $k \geq 2$; $c^i, d^i$ are $n-$vectors; $c^i_0, d^i_0$ are scalars for each $i \in\{1, 2, \ldots , k\}$. Throughout this article, we assume that
$ d^ix+d^i_0 > 0$ over $\mathcal{X}$ for all $i\in\{1, 2, \ldots , k\}$.

\begin{definition}
	A solution  $x \in \mathcal{D}$ is called an \textbf{efficient} solution for $(MOILFP)$, if there exists no point $y \in \mathcal{D}$ such that $Z_i(y) \geq Z_i(x)$, for all $i\in\{1,\ldots, k\}$ and $Z_i(y) > Z_i(x)$ for at least one $i \in \{1,\ldots, k\}$.
\end{definition}

Some researchers tackled this problem in their papers especially in the continuous case; Kornbluth  and Steuer \citeyearpar{KS81} presented a simplex-based solution procedure to find all weakly efficient vertices, while Costa \citeyearpar{costa07} proposed a new technique to optimize a weighted sum of the linear fractional objective functions. Finally, Cambini et al. \citeyearpar{CMS99} wrote a survey on the biobjective fractional problems.

In general, since it is computationally heavy, authors avoid trying to generate the whole efficient set. However, Chergui and Moula\"{\i} \citeyearpar{CM08} used a branch-and-cut procedure to generate the whole efficient set. In their paper, they combined branch-and-bound exploration with a cutting plane technique to generate the efficient set without enumerating the whole integer domain.

In this paper, we propose a method that generates a part of the efficient set of $(MOILFP)$ that is defined by decision makers preferences. Therefore, let us assume the existence of two utility functions $f_1$ and $f_2$:

\begin{equation}
	\label{bio}
	(BOILFP)
	\left\{
	\begin{array}{ll}
	\max f_1(x)=\cfrac{p^1x+\alpha^1}{q^1x+\beta^1}\\\noalign{\smallskip}
		\max f_2(x)=\cfrac{p^2x+\alpha^2}{q^2x+\beta^2}\\\noalign{\smallskip}
	s.t.\\
	x\in \mathcal{D}

	\end{array}
	\right.
\end{equation}

where  $p^1,~ p^2,~ q^1,~q^2$ are $n-$vectors; $\alpha^1,~ \beta^1$ and $\alpha^2,~ \beta^2$ are scalars. Throughout this article, we also assume that $ q^1x+\beta^1 > 0$  and $q^2x+\beta^2>0$ over $\mathcal{X}$ and the following notations are used for  $\mathcal{B}_l,~\mathcal{N}_l$ and $x^{*(l)}$ :

\[
\begin{array}{lll}
P^s(x)=p^sx+\alpha^s,&~Q^s(x)=q^sx+\beta^s,&~s=1,2,\\
z^1_i(x)=c^ix+c^i_0,&~z^2_i(x)=d^ix+d^i_0,&~i\in\{1,2,\ldots,k\},\\
\nu^{s(l)}= p^{s} - p^{s}_{\mathcal{B}_l}\mathcal{B}_l^{-1} A,&~\mu^{s(l)}=q^{s} - q^{s}_{\mathcal{B}_l}\mathcal{B}_l^{-1} A,&~s=1,2, \\
\eta^{i(l)}= c^{i} - c^{i}_{\mathcal{B}_l}\mathcal{B}_l^{-1} A,&~\vartheta^{i(l)}=d^{i} - d^{i}_{\mathcal{B}_l}\mathcal{B}_l^{-1} A,&~i\in\{1,2,\ldots,k\}. \\
\end{array}\]

Therefore, the problem that we propose to solve is:

\begin{equation}
	\label{bmlf}
	(B_E)
	\left\{
	\begin{array}{ll}
	\max f_1(x)=\cfrac{p^1x+\alpha^1}{q^1x+\beta^1}   \\ \noalign{\smallskip}
		\max f_2(x)=\cfrac{p^2x+\alpha^2}{q^2x+\beta^2} \\ \noalign{\smallskip}
	s.t.\\
	x\in \mathcal{X}_E, 
	\end{array}
	\right.
\end{equation}

where $\mathcal{X}_E$ is the set of efficient solutions of $(MOILFP)$,  we also note $\mathcal{X}_{E'}$ the set of efficient solutions of the $(BOILFP)$.
The problem $(B_E)$ has a real practical utility, yet it has not been considered. It arises, for example, whenever two firms have to optimize their respective interest and a common problem.

\subsection{Efficiency test for multi-objective linear fractional programs}

The efficiency test of integer feasible solutions $ x^*$ of the problem (\ref{moilfp}) is verified by solving the mixed integer linear program (\ref{MM}).

\begin{equation}\label{MM}
MM( x^*) ~ ~ ~  \left\{\begin{array}{llll}
\max ~ ~ \Psi = \sum\limits_{i=1}^{k} \psi_i\\
s.t.\\
~ ~ ~ \; ~ ~ [c^i-Z_i( x^*)d^i]x - \psi_i = Z_i( x^*)d_0^i - c_0^i, \; i=1,...,k \\
~ ~ ~ ~ ~ ~ ~ x\in \mathcal{D}; \\
~ ~ ~ ~ ~ ~ ~ \psi_{i} \geq  0,  \text{ for all } i\in \{1,...,k\}
\end{array}
\right.
\end{equation}

\noindent where $Z_i( x^*)=\dfrac{c^i x^*+c_0^i}{
d^i x^*+d_0^i}$ and $\psi= (\psi_i)_{i\in\{1,...,K\}}\in {\mathbb R}$.

\begin{theorem}
A feasible  solution $x^*$ of  problem (\ref{moilfp}) is efficient if and only if the optimal objective value of problem $MM(x^*)$ is zero.
\end{theorem}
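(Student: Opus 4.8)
The plan is to eliminate the auxiliary variables $\psi_i$ from $MM(x^*)$ and to reinterpret each of them as a signed gap between the objective value $Z_i$ at a candidate point $x$ and at $x^*$; once this is done, the equivalence with efficiency follows almost immediately from the definition, and the argument is a fractional analogue of Ehrgott's test.

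First I would solve each equality constraint for $\psi_i$. The $i$-th constraint reads $\psi_i = (c^ix+c^i_0) - Z_i(x^*)(d^ix+d^i_0)$. Since $Z_i(x)=\dfrac{c^ix+c^i_0}{d^ix+d^i_0}$ holds identically on $\mathcal{X}$, we may substitute $c^ix+c^i_0 = Z_i(x)(d^ix+d^i_0)$ to obtain
\[
\psi_i = (d^ix+d^i_0)\bigl(Z_i(x)-Z_i(x^*)\bigr), \qquad i=1,\ldots,k .
\]
Thus for each feasible $x\in\mathcal{D}$ the value of $\psi_i$ is completely determined by $x$, and, invoking the standing assumption $d^ix+d^i_0>0$ over $\mathcal{X}$, the sign constraint $\psi_i\ge 0$ is equivalent to $Z_i(x)\ge Z_i(x^*)$, while $\psi_i>0$ is equivalent to $Z_i(x)>Z_i(x^*)$. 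Hence the feasible region of $MM(x^*)$ is exactly $\{x\in\mathcal{D} : Z_i(x)\ge Z_i(x^*)\ \text{for all } i\}$.

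Two consequences drive the argument. First, $x=x^*$ is feasible and gives $\psi_i=0$ for every $i$, so the optimal value $\Psi^*$ satisfies $\Psi^*\ge 0$; moreover $\Psi^*$ is finite and attained, because $\mathcal{X}$ is a polytope and therefore $\mathcal{D}=\mathcal{X}\cap\mathbb{Z}^n$ is finite. Second, by the displayed identity, $\Psi^*>0$ holds precisely when some feasible $\bar x$ has $\psi_j>0$ for at least one $j$, that is, when $Z_i(\bar x)\ge Z_i(x^*)$ for all $i$ and $Z_j(\bar x)>Z_j(x^*)$ for some $j$ — which is exactly the statement that $\bar x$ dominates $x^*$ in the sense of the definition of efficiency.

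With these observations the two directions are contrapositives of one another. If $x^*$ is not efficient, a dominating point $\bar x$ is feasible for $MM(x^*)$ and yields $\Psi\ge\psi_j>0$, so $\Psi^*>0$. Conversely, if $\Psi^*>0$, an optimal $\bar x$ dominates $x^*$, so $x^*$ is not efficient. Equivalently, $\Psi^*=0$ if and only if no feasible point strictly improves some $Z_i$ without degrading another, i.e.\ if and only if $x^*$ is efficient. I expect the only genuine care-point to be the algebraic reduction of the equality constraint together with the explicit use of the positivity of the denominators $d^ix+d^i_0$, since it is precisely this sign information that converts the numerical value of $\psi_i$ into a componentwise comparison of the fractional objectives; the remainder is the standard Ehrgott-type dominance argument.
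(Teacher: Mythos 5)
Your proof is correct and follows essentially the same route as the paper's: both translate the sign of $\psi_i$ into the componentwise comparison $Z_i(x)\ge Z_i(x^*)$ via the identity $\psi_i=(d^ix+d^i_0)(Z_i(x)-Z_i(x^*))$ and the positivity of the denominators, and then invoke the definition of efficiency. If anything, your write-up is slightly more careful than the paper's, since you make explicit that $x^*$ itself is feasible for $MM(x^*)$ (so the optimum is nonnegative and attained) and you phrase the converse as a statement about the optimal value rather than about one particular optimal solution.
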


\begin{proof}

Let $(y,\psi)$ be any feasible solution of problem $MM( x^*)$, then for all $ i=1,\ldots,k $, we have:
\begin{equation*}
\begin{array}{l}
    \psi_i\geq 0,  \\ \Longleftrightarrow
    (c^i-Z_i( x^*)d^i)y - Z_i( x^*) d_0^i + \lambda c_0^i\geq 0, \\
  \Longleftrightarrow Z_i(y)\geq Z_i( x^*).
\end{array}
\end{equation*}
assume that $\Psi=\sum\limits_{i=1}^{k} \psi_i\neq 0$, then there exists $r\in \{1,\ldots,k\}$ such that
\begin{equation*}
\begin{array}{l}
   \psi_r > 0 \Longrightarrow (c^r-Z_r( x^*)d^r)y - Z_r( x^*) d_0^r +  c_0^r > 0\\
   \qquad \quad \Longrightarrow Z_r(y)> Z_r( x^*),
\end{array}
\end{equation*}
therefore, there exists a feasible solution $y\in \mathcal{D}$ such that $Z_{i}(y)\geq Z_{i}( x^*)$ for all $i\in \left\{ 1,...,k\right\}$ and $Z_{r}(y)>Z_{r}( x^*)$ for at least one $r\in \left\{ 1,...,k\right\}$. Hence, $ x^*$ is not efficient.

Conversely, let ($y,\psi$) be an optimal  solution of $MM( x^*)$, where $\Psi=0$, then we have

$\sum\limits_{i=1}^{k} \psi_i=0\Longrightarrow  \psi_i=0,\quad i=1,\ldots,k $,   it follows that for all criteria $(Z_{i})_{i=1,\ldots,k}$
\begin{equation*}
\begin{array}{l}
    (c^i-Z_i( x^*)d^i)y - Z_i( x^*) d_0^i +  c_0^i=0, \\
\Longrightarrow  c^iy +  c_0^i= Z_i( x^*)(d^iy +  d_0^i),\\
\Longrightarrow   Z_i(y)=Z_i( x^*).
\end{array}
\end{equation*}
Hence, the criterion vector $(Z_1( x^*),Z_2( x^*),\ldots,Z_k( x^*))$ is not dominated  , then $ x^*$ is efficient.
\end{proof}

\section{Methodology, Algorithm and Theoretical Results}
The naive approach to solve $(B_E)$ consists in enumerating the elements of $\mathcal{X}_E$ and $\mathcal{X}_{E'}$ and determine their intersection.
In what follows, we propose a method that avoids the enumeration of both and gives the exact solution set of $(B_E)$. This method is a branch-and-cut procedure that combines the branch-and-bound exploration technique and cutting plane method. The solution set of $(B_E)$ is noted $(\S)$.

In this paper, we propose to use an equivalent cutting plane method of Chergui and Moula\"{\i} \citeyearpar{CM08} to remove  solutions that do not belong to $\mathcal{X}_E$ or $\mathcal{X}_{E'}$.  Our method can be summarized as follows:

\begin{description}
\item[\textbf{The branching process}] The search for efficient solutions is an implicit exploration structured as a tree where $f_1$ or $f_2$ can be chosen to be optimized at each node. The choice of function $f_1$ or $f_2$ will not change the result set $\S$, however, it will alter the search tree and change the evolution of $ \S $ but not the final result.

At each node $l$ of the tree we optimize the function $f_1$ over the subdomain corresponding to the node $\mathcal{X}_l$.

\[\label{prob2}
(LFP_l)\left\{\begin{array}{ll}
\max f_1(x)=\cfrac{p^1x+\alpha^1}{q^1x+\beta^1}\\\noalign{\smallskip}
s.t.\\
x\in\mathcal{X}_l
\end{array}\right.
\]

where $\mathcal{X}_0=\mathcal{X}$ and $\mathcal{X}_{l}$ is subset of the original set $\mathcal{X}$ to explore at node $l$ of the tree.

The functions $f_1$ and $f_2$ are linear fractional. To solve the $(LFP_l)$, we can either use the method presented in \citep{CC62} or another simplex method like those presented in \citep{martos64,CM86} .

After solving $(LFP_l)$, three cases can occur:

\begin{description}
	\item[\textbf{$(LFP_l)$ has no solution}]  As for  a classical Branch-and Bound process the node has no descendant.

	\item[\textbf{$(LFP_l)$ has a non-integer solution $x^{*(l)}$ }] If  the solution of $(LFP_l)$ is not integer, the approach is the same as the Branch-and-Bound for integer programming, which means that the node has two descendants $l_1$ and $l_2$ such as:
	Let $r$ be an index where $ x_r^{*(l)} $ is not integer then for $l_1$ we put $\mathcal{X}_{l_{1}}=\left\{x\in\mathcal{X}_{l} | x_r \leq \left\lfloor {x_r^{*(l)}}\right\rfloor   \right\}$ and for $l_{2}$ we put $\mathcal{X}_{l_2}=\left\{x\in\mathcal{X}_l | x_r \geq \left\lceil {x_r^{*(l)}}\right\rceil \right\}$.

	\item[\textbf{$(LFP_l)$ has an integer solution $x^{*(l)}$}] If after calculating the solution $x^{*(l)}$, this solution is integer then two steps are performed:
	
	\begin{itemize}
		\item \textbf{\emph{The first step is to test the membership of $x^{*(l)}$ to $\S$:}}			
			
		Since there are two  efficiency tests to take into account:
			\[\begin{array}{ll}\label{prob21}(T^1_{x^{*(l)} })
\left\{\begin{array}{ll}
\max \sum_{i=1}^{i=k} w_i\\\noalign{\smallskip}
s.t.\\
\left(c^i-Z_i(x^{*(l)})d^i\right)x-w_i=Z_i(x^{*(l)})d^i_0-c^i_0,~i=\overline{1,k}\\
x\in\mathcal{D}
\end{array}\right.
\\
(T^2_{x^{*(l)} })
\left\{\begin{array}{ll}
\max  v_1+v_2\\\noalign{\smallskip}
s.t.\\
\left(p^1-f_1(x^{*(l)})q^1\right)x-v_1=f_1(x^{*(l)})\beta^1-\alpha^1~~~~~~~~~~~~~\\
\left(p^2-f_2(x^{*(l)})q^2\right)x-v_2=f_2(x^{*(l)})\beta^2-\alpha^2\\
x\in\mathcal{D}
\end{array}\right.
\end{array}
\]
			
The first program is for testing the efficiency in the program	$(MOILFP)$ and the second for the program $(BIOLFP)$. If both programs  have a zero as an optimal value it means that the solution  $	x^{*(l)}$ is efficient for 	$(MOILFP)$ and $(BIOLFP)$ and thus $ x^{*(l)}\in\S$.

		\item \emph{\textbf{The second is to search the descendants of the node l:}} The node in this case has either no descendant or one direct descendant depending on the growth direction of the functions $Z_i, i\in\{1,\ldots,k\}$ and $f_2$. If there is no increasing for all the functions $Z_i$ in the domain $\mathcal{X}_l$ then $x^{*(l)}$ dominates all the domain $\mathcal{X}_l$ in terms of $(MOILFP)$ and the node is fathomed. Similarly if there is no solution that is at least better than $x^{*(l)}$ in the criterion $f_2$, then the solution $x^{*(l)}$ dominates the whole domain $\mathcal{X}_l$ in terms of $(BOILFP)$ and the node is fathomed.

On the other hand, if there is a possible improvement of one of the functions $Z_i, i\in\{1,\ldots,k\}$ and $f_2$, an efficient solution in terms of $(MOILFP)$ and $(BOILFP)$ might still exists in $ \mathcal{X}_l $ and the node has  descendants.	
So, the cutting plane method that we use permit to remove solutions that are dominated by $x^{*(l)}$ criteria vectors.

Let $\mathcal{B}_l$  and $\mathcal{N}_l$  be the sets of basic variables and non-basic variables indexes of $x^{*(l)}$ respectively. In what follows we will use these notations for $j\in \mathcal{N}_l$:

\[
\begin{array}{lll}
\boldsymbol{\gamma^{2(l)}}= Q^2(x^{*(l)})\nu^{2(l)}- P^2(x^{*(l)})\mu^{2(l)}\\
\boldsymbol{\lambda^{i(l)}}= z^2(x^{*(l)})\eta^{i(l)}-z^1(x^{*(l)})\vartheta^{i(l)},~i\in\{1,2,\ldots,k\}
\end{array}\]

\noindent Since $\boldsymbol{\gamma^{1(l)}}$ and $\boldsymbol{\gamma^{2(l)}}$ represent the reduced gradient vectors for $f_1$ and $f_2$, and $\boldsymbol{\lambda^{i(l)}}$ corresponds to the reduced gradient vectors for the criterion $Z_i$  for  each $i\in\{1,2,\ldots,k\}$. We define the sets $\mathcal{H}_l$ and $\mathcal{H}'_l$  for each node $l$ as follows:

\begin{equation}\label{coup1}
\mathcal{H}_l=\left\{j \in \mathcal{N}_l|\exists i\in\{1,...k\}; \boldsymbol{\lambda^{i(l)}_j}>0\right\}\cup\left\{j \in \mathcal{N}_l|  \boldsymbol{\lambda^{i(l)}_j}=0~,\forall i\in\{1,\ldots,k\} \right\}
\end{equation}

\begin{equation}\label{coup2}
\mathcal{H}'_l=\left\{j \in \mathcal{N}_l| \boldsymbol{\gamma^{2(l)}_j} >0\right\}\cup\left\{j \in \mathcal{N}_l|  \boldsymbol{\gamma^{2(l)}_j}=0 \text{ and } \boldsymbol{\gamma^{1(l)}_j}=0~ \right\}
\end{equation}

In the following theorem \ref{theo} we will prove that the cuts:
\[\sum_{j\in\mathcal{H}_l}x_j\geq1 \text{ and } \sum_{j\in\mathcal{H}'_l}x_j\geq1\]
are valid and eliminate solutions that are dominated by $x^{*(l)}$.

	\end{itemize}	
\end{description}

\noindent After finding an integer solution, we update $\S$ and calculate $\mathcal{H}_l,\mathcal{H}'_l$.  If $\mathcal{H}_l=\emptyset $ or $\mathcal{H}'_l=\emptyset $ it means that the remaining domain contains no efficient solution either in terms of $(MOILFP)$ or $(BOILFP)$, and the node $l$ is fathomed (See proposition \ref{prop}).

\noindent Otherwise, if  $\mathcal{H}_l\neq \emptyset $ and $\mathcal{H}'_l\neq \emptyset $ the node $l$ has one successor $l_0$ with $\mathcal{X}_{l_0}= \left\{x\in\mathcal{X}_l | \sum_{j\in\mathcal{H}_l}x_j\geq 1,~\sum_{j\in\mathcal{H}'_l}x_j\geq 1 \right\}$.
\end{description}

\noindent Thus, there exists two rules of fathoming nodes, the first rule is when the corresponding program $(LFP_l)$ is not feasible and the second is when  $\mathcal{H}_l=\emptyset$ or $\mathcal{H}'_l=\emptyset$ .


In the following algorithm, the nodes in the tree structure  \ref{algo} can be treated according to the backtracking or the depth first principles or any other exploration strategy.

\begin{algorithm}[H]
 \KwResult{The set $\S$ containing all the  efficient solutions of the program (\ref{bmlf}) }
 initialization\ $l = 0$ ,$\mathcal{X}_l=\left\{x\in\mathbb{R}^n| Ax\leq b \text{ and }x\geq 0\right\}$ and $\S=\emptyset$; \\
\While{there is a non-fathomed node $l$}
{
solve $(LFP_l)$ ;\\
\[(LFP_l)\left\{\begin{array}{ll}
\max & f_1(x) \\
s.t. \\ & x\in\mathcal{X}_l
\end{array}\right.
\]
\eIf{$(LFP_l)$ has an optimal solution $x^{*(l)}$}
{
\eIf{$x^{*(l)}$ is integer}
{Update  $\S$;\\
Construct the sets $\mathcal{H}_l$,~$\mathcal{H'}_l$  ;\\
\eIf{$\mathcal{H}_l=\emptyset$ or $\mathcal{H'}_l=\emptyset$ } {Fathom the node $l$}{ Add the cuts (\ref{coup1}) and (\ref{coup2})  to $l_0$ the successor of $l$;}
}
{
Choose an index $r$ such as  ${x_r^{*(l)}}$ is fractional. Then, split the program $(LFP_l)$ into two sub-programs, by
adding respectively the constraints $x_r \leq \left\lfloor {x_r^{*(l)}}\right\rfloor$ and $x_r \geq \left\lceil {x_r^{*(l)}}\right\rceil$ to  obtain $(LFP_{l_1} )$ and  $(LFP_{l_2} )$ ( $l_1 \geq l + 1$, $l_2 > l + 1$ and $l_1 \neq l_2$);
}
}
{Fathom the node l;}}
\caption{Biobjective optimization over MultiObjective Integer Linear Fractional Program} \label{algo}
\end{algorithm}

\section{Theoretical results}\label{sectiont}

The following theoretical tools show that the algorithm yields the solution set  $\S$ of the program (\ref{bmlf}) $(B_E)$ in a finite number of iterations.

\begin{theorem}\label{theo}
Assume that $\H_l \neq \emptyset$ and ${\mathcal{H}'}_l \neq \emptyset$ at the current integer solution $x^{*(l)}$. If $x \in\S,~ x \neq x^{*(l)}$ and   $x\in\mathcal{X}_l$, then $x \in \mathcal{X}_{l_1}$  ($l_1$ successor of $l$).
\end{theorem}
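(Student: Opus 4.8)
The plan is to read the single successor $l_1$ as the node whose domain is $\mathcal{X}_{l_1}=\{x\in\mathcal{X}_l \mid \sum_{j\in\mathcal{H}_l}x_j\ge1,\ \sum_{j\in\mathcal{H}'_l}x_j\ge1\}$, so that, since $x\in\mathcal{X}_l$ is already assumed, the claim $x\in\mathcal{X}_{l_1}$ reduces to showing that $x$ satisfies the two cuts. The engine for both verifications is one sensitivity identity. Writing $(LFP_l)$ in standard form with basis $\mathcal{B}_l$ at $x^{*(l)}$, every feasible $x$ satisfies $z^1_i(x)=z^1_i(x^{*(l)})+\sum_{j\in\mathcal{N}_l}\eta^{i(l)}_j x_j$ and $z^2_i(x)=z^2_i(x^{*(l)})+\sum_{j\in\mathcal{N}_l}\vartheta^{i(l)}_j x_j$, and similarly for $P^s,Q^s$. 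Substituting these into the difference of ratios and using the definitions of $\boldsymbol{\lambda^{i(l)}}$ and $\boldsymbol{\gamma^{s(l)}}$ collapses the numerators to give
\[ Z_i(x)-Z_i(x^{*(l)})=\frac{\sum_{j\in\mathcal{N}_l}\boldsymbol{\lambda^{i(l)}_j}\,x_j}{z^2_i(x)\,z^2_i(x^{*(l)})},\qquad f_s(x)-f_s(x^{*(l)})=\frac{\sum_{j\in\mathcal{N}_l}\boldsymbol{\gamma^{s(l)}_j}\,x_j}{Q^s(x)\,Q^s(x^{*(l)})}. \]
Because all four denominators are positive by the standing assumptions, the sign of each criterion gap equals the sign of the corresponding reduced-gradient sum; this is the dictionary that converts the combinatorial definitions (\ref{coup1}) and (\ref{coup2}) into dominance statements.

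For the $(MOILFP)$ cut I would argue by contradiction. Since $x$ is integer and the non-basic coordinates are non-negative, $\sum_{j\in\mathcal{H}_l}x_j<1$ forces $x_j=0$ for every $j\in\mathcal{H}_l$; as $x\ne x^{*(l)}$, some $j_0\in\mathcal{N}_l\setminus\mathcal{H}_l$ then has $x_{j_0}>0$. By (\ref{coup1}), every $j\in\mathcal{N}_l\setminus\mathcal{H}_l$ obeys $\boldsymbol{\lambda^{i(l)}_j}\le0$ for all $i$, with $\boldsymbol{\lambda^{i(l)}_j}<0$ for at least one $i$. Feeding these signs into the identity gives $Z_i(x)\le Z_i(x^{*(l)})$ for every $i$, while the index $i_0$ witnessing $\boldsymbol{\lambda^{i_0(l)}_{j_0}}<0$ makes the numerator of criterion $i_0$ strictly negative, so $Z_{i_0}(x)<Z_{i_0}(x^{*(l)})$. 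Hence $x^{*(l)}$ dominates $x$ for $(MOILFP)$, contradicting $x\in\S\subseteq\mathcal{X}_E$; therefore $\sum_{j\in\mathcal{H}_l}x_j\ge1$.

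The $(BOILFP)$ cut runs in parallel, but here I must also use that $x^{*(l)}$ solves the relaxation $(LFP_l)$ for $f_1$, so optimality gives $\boldsymbol{\gamma^{1(l)}_j}\le0$ for all $j\in\mathcal{N}_l$. Assuming $\sum_{j\in\mathcal{H}'_l}x_j<1$, integrality again forces $x_j=0$ on $\mathcal{H}'_l$ and some $j_0\in\mathcal{N}_l\setminus\mathcal{H}'_l$ has $x_{j_0}>0$. Definition (\ref{coup2}) makes every such $j$ satisfy $\boldsymbol{\gamma^{2(l)}_j}\le0$, so $f_2(x)\le f_2(x^{*(l)})$, while $f_1$-optimality gives $f_1(x)\le f_1(x^{*(l)})$. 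To produce a strict gap I split on the active indices: if some active $j$ has $\boldsymbol{\gamma^{2(l)}_j}<0$ then $f_2(x)<f_2(x^{*(l)})$; otherwise every active $j$ has $\boldsymbol{\gamma^{2(l)}_j}=0$, which by (\ref{coup2}) forces $\boldsymbol{\gamma^{1(l)}_j}\ne0$, hence $\boldsymbol{\gamma^{1(l)}_j}<0$ and $f_1(x)<f_1(x^{*(l)})$. In either branch $x^{*(l)}$ dominates $x$ for $(BOILFP)$, contradicting $x\in\S\subseteq\mathcal{X}_{E'}$; so $\sum_{j\in\mathcal{H}'_l}x_j\ge1$ and thus $x\in\mathcal{X}_{l_1}$.

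I expect the main obstacle to be the upgrade from ``$>0$'' to ``$\ge1$'': the dominance argument only shows that a relevant non-basic coordinate is strictly positive, and turning this into the integer bound $1$ needs the non-basic variables entering each cut to be integer-valued at feasible integer points. This is the place to invoke integrality of the data, so that the slack variables are integer whenever $x$ is; I would state this hypothesis explicitly. The secondary delicate point is the case split in the $(BOILFP)$ cut, where the optimality of $x^{*(l)}$ for $f_1$ is indispensable: without $\boldsymbol{\gamma^{1(l)}_j}\le0$, an index with $\boldsymbol{\gamma^{2(l)}_j}=0$ and $\boldsymbol{\gamma^{1(l)}_j}>0$ could break the domination, and $\mathcal{H}'_l$ would no longer be the correct index set on which to cut.
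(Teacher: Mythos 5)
Your proof is correct and follows essentially the same route as the paper's: argue by contraposition that a point violating either cut must have $x_j=0$ on the corresponding cut set and a positive non-basic coordinate outside it, then read off dominance from the signs of the reduced gradients $\boldsymbol{\lambda^{i(l)}}$ and $\boldsymbol{\gamma^{2(l)}}$. Two of your refinements are genuine improvements on the printed argument: (i) you use the exact sensitivity identity $z^1_i(x)=z^1_i(x^{*(l)})+\sum_{j\in\mathcal{N}_l}\eta^{i(l)}_j x_j$ summed over all non-basic indices, whereas the paper writes the update with a single index $j$ and the min-ratio step $\boldsymbol{\delta_j}$, which literally describes only an adjacent pivot rather than an arbitrary feasible point of $\mathcal{X}_l$; and (ii) in the $(BOILFP)$ case the paper stops at the weak inequalities $f_1(x)\le f_1(x^{*(l)})$ and $f_2(x)\le f_2(x^{*(l)})$ and concludes $x\notin\mathcal{X}_{E'}$, which does not follow without a strict inequality somewhere, while your case split on the active indices (using that $\boldsymbol{\gamma^{2(l)}_j}=0$ forces $\boldsymbol{\gamma^{1(l)}_j}<0$ outside $\mathcal{H}'_l$) supplies exactly the missing strictness. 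Your closing caveat is also well placed: the step from $\sum_{j\in\mathcal{H}_l}x_j<1$ to $x_j=0$ on $\mathcal{H}_l$ requires the non-basic (slack) variables to be integer at integer feasible points, hence integer data, an assumption the paper uses tacitly.
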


\proof
    Let $x \neq x^{*(l)}$ be an integer solution in domain $\mathcal{X}_l$ such as $x \notin \mathcal{X}_{l_1}$, two
    cases can occur:

\begin{itemize}
\item $x\notin\mathcal{X}_{l_1}$ because $x\in\left\{x\in\mathcal{X}_l|~\sum_{j\in\mathcal{N}_l/\mathcal{H}_l} x_j \geq 1\right\}$, this implies
  \begin{equation*}
  x\in \{x\in \mathcal{X}_l| \sum\limits_{j\in  \H_l}x_j< 1\}.
 \end{equation*}

\noindent Therefore, the following inequalities hold:
  \begin{equation*}
    \begin{array}{l}
       \quad\sum\limits_{j\in \H_l}x_j < 1 \\
       \sum\limits_{j\in \N_{l}\setminus \H_l}x_j \geq 1.
     \end{array}
  \end{equation*}

It follows that $x_{j}=0$ for all $j\in \H_{l}$ and $x_{j}\ge 1$ for at least one index $j\in \N_{l}\setminus \H_l$.

From the optimal simplex table corresponding to solution $x^{*(l)}$, the updated value of each objective function is written according to the non-basic indexes $j\in \N_l$ as follows:
\begin{equation*}
\begin{array}{cc}
z_i^1(x)=z^1_i(x^{*(l)}) + \boldsymbol{\delta_{j}}\eta^{i(l)}_j,\\
z_i^2(x)=z^2_i(x^{*(l)}) + \boldsymbol{\delta_{j}}\vartheta^{i(l)}_j,\\
\end{array}
\end{equation*}
\noindent where  $\boldsymbol{\delta_j}=\dfrac{x^{*(l)}_{\B_{l_{(r^*)}}}}{A^{r^*}_{j}}=\min \left\{ \dfrac{x^{*(l)}_{\B_{l_{(r)}}}}{A^r_{j}}|~ A^r_{j}>0\right\}$.

Thus, we have for all  $i~\in\{1,2,\ldots,k\}$ :

\begin{equation*}
\begin{array}{llll}
 Z_{i}(x)&= \dfrac{z^{1}_i(x)}{z^2_{i}(x)}=
  \dfrac{z^{1}_i(x^{*(l)}) + \boldsymbol{\delta_{j}}\eta^{i(l)}_j}{z^2_{i}(x^{*(l)}) + \boldsymbol{\delta_{j}}\vartheta^{i(l)}_j},
 \\
\end{array}
\end{equation*}

then we can write

 \begin{equation*}
 \begin{array}{l}
 Z_{i}(x)-Z_{i}(x^{*(l)})= \dfrac{z^{1}_i(x^{*(l)}) + \boldsymbol{\delta_{j}}\eta^{i(l)}_j}{z^2_{i}(x^{*(l)}) + \boldsymbol{\delta_{j}}\vartheta^{i(l)}_j}-\dfrac{z^{1}_i(x^{*(l)})}{z^2_{i}(x^{*(l)})}\\\noalign{\smallskip}

   \qquad= \boldsymbol{\delta_{j}}\dfrac{\left[z^2_{i}(x^{*(l)})( \eta^{i(l)}_j)- z^{1}_i(x^{*(l)})(\vartheta^{i(l)}_j)\right]}{ {z}_i^2(x^{*(l)})\left[z^2_{i} (x^{*(l)})+ \boldsymbol{\delta_{j}} (\vartheta^{i(l)}_j)\right]}\\\noalign{\smallskip}

   \qquad= \boldsymbol{\delta_{j}}\dfrac{\left[z^2_{i}(x^{*(l)})(\eta^{i(l)}_j)- z^{1}_i(x^{*(l)})(\vartheta^{i(l)}_j)\right]}{ {z}_i^2(x^{*(l)})z^2_{i} (x)}\\\noalign{\smallskip}
 \end{array}
 \end{equation*}

Since we have already the following notation:
\[  \boldsymbol{\lambda^{i(l)}}= z^2(x^{*(l)})\eta^{i(l)}-z^1(x^{*(l)})\vartheta^{i(l)},~i=1,\ldots,k  \]

As the components $\boldsymbol{\lambda^{i(l)}_j}\leq 0,$ for every index $j\in \N_{l}\backslash \H_{l}$

\begin{equation*}
\begin{array}{ll}
Z_{i}(x)-Z_{i}(x^{*(l)}) \leq 0,~ &i=1,\ldots,k
\end{array}
\end{equation*}

and  for at least one criterion $ i_0\in\{1,\ldots,r\},  Z_{i_0}(x)-Z_{i_0}(x^{*(l)}) < 0$.

Hence, the criterion vector $\left( Z_{1}(x),Z_{2}(x), \ldots,Z_{k}(x)\right)$ is dominated by the criterion vector
$\left(  Z_1(x^{*(l)}), Z_{2}(x), \ldots, Z_k  (x^{*(l)})  \right)$ then $x \notin \mathcal{X}_E $. Hence $ x \notin \S $

\item  $x\notin\mathcal{X}_{l_1}$  because $x\in\left\{x\in\mathcal{X}_l|~\sum_{j\in\mathcal{N}_l/\mathcal{H'}_l} x_j \geq 1\right\}$,
by the same way it implies that
  \begin{equation*}
  x\in \{x\in \mathcal{X}_l| \sum\limits_{j\in  \H'_l}x_j< 1\}.
 \end{equation*}

\noindent Therefore, the following inequalities hold:
  \begin{equation*}
    \begin{array}{l}
       \quad\sum\limits_{j\in \H'_l}x_j < 1 \\
       \sum\limits_{j\in \N_{l}\setminus \H'_l}x_j \geq 1.
     \end{array}
  \end{equation*}

It follows that $x_{j}=0$ for all $j\in \H'_{l}$ and $x_{j}\ge 1$ for at least one index $j\in \N_{l}\setminus \H'_l$.

From the optimal simplex table  corresponding to solution $x^{*(l)}$, the updated value of each objective function  is written according the non-basic indexes $j\in \N_l$ as follows:
\begin{equation*}
\begin{array}{cc}
P^2(x)=P^2(x^{*(l)}) + \boldsymbol{\delta_{j}}\nu^{2(l)}_j,\\
Q^2(x)=Q^2(x^{*(l)}) + \boldsymbol{\delta_{j}}\mu^{2(l)}_j,\\
\end{array}
\end{equation*}
\noindent where  $\boldsymbol{\delta_j}=\dfrac{x^{*(l)}_{\B_{l_{(r^*)}}}}{A^{r^*}_{j}}=\min \left\{ \dfrac{x^{*(l)}_{\B_{l_{(r)}}}}{A^r_{j}}|~ A^r_{j}>0\right\}$.

Thus, we have  :

\begin{equation*}
\begin{array}{llll}
 f_{2}(x)&= \dfrac{P^{2}_i(x)}{Q^2(x)}=
  \dfrac{P^{2}(x^{*(l)}) + \boldsymbol{\delta_{j}}\nu^{2(l)}_j}{Q^2_{i}(x^{*(l)}) + \boldsymbol{\delta_{j}}\mu^{2(l)}_j},
 \\
\end{array}
\end{equation*}

then we can write

 \begin{equation*}
 \begin{array}{l}
 f_{2}(x)-f_{2}(x^{*(l)})= \dfrac{P^{2}(x^{*(l)}) + \boldsymbol{\delta_{j}}\nu^{2(l)}_j}{Q^2(x^{*(l)}) + \boldsymbol{\delta_{j}}\mu^{2(l)}_j}-\dfrac{P^{2}_i(x^{*(l)})}{Q^2(x^{*(l)})}\\\noalign{\smallskip}

   \qquad= \boldsymbol{\delta_{j}}\dfrac{\left[Q^2(x^{*(l)})( \nu^{2(l)}_j)- P^{2}(x^{*(l)})(\mu^{2(l)}_j)\right]}{ {Q}^2(x^{*(l)})\left[Q^2_{i} (x^{*(l)})+ \boldsymbol{\delta_{j}} (\mu^{2(l)}_j)\right]}\\\noalign{\smallskip}

   \qquad= \boldsymbol{\delta_{j}}\dfrac{\left[Q^2_{i}(x^{*(l)})(\nu^{2(l)}_j)- P^{2}(x^{*(l)})(\mu^{2(l)}_j)\right]}{ {Q}^2(x^{*(l)})Q^2 (x)}\\\noalign{\smallskip}
 \end{array}
 \end{equation*}

Since we have already the following notation:
\[\boldsymbol{\gamma^{2(l)}}= Q^2(x^{*(l)})\nu^{2(l)}- P^2(x^{*(l)})\mu^{2(l)}  \]

As the components $\boldsymbol{\gamma^{2(l)}}_j\leq 0,$ for every index $j\in \N_{l}\backslash \H'_{l}$

\begin{equation*}
\begin{array}{ll}
f_2(x)-f_2(x^{*(l)}) \leq 0
\end{array}
\end{equation*}

\noindent Thus, $f_{2}(x)\leq f_{2}(x^{*(l)})$ , with  $f_{1}(x)\leq f_{1}(x^{*(l)})$ since $x^{*(l)}$ is the maximum of the  program $(LFP_l)$, then $x\notin \mathcal{X}_{E'}$. Hence $x\notin \mathcal{S}_{E}$.
\end{itemize}

\noindent Since $x\notin \mathcal{S}_E$ in both cases, then $x$ is not a solution of the program $(B_E)$.
\endproof

\begin{proposition}\label{prop}
Suppose that $\mathcal{H}_l=\emptyset$ or $\mathcal{H'}_l=\emptyset$ at the current integer solution $x^{*(l)}$  then there is no solution in the remaining domain that is not dominated by $x^{*(l)}$.
\end{proposition}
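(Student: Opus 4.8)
The plan is to read Proposition~\ref{prop} as the limiting case of Theorem~\ref{theo}: the sign computations already carried out there determine the direction in which each objective moves along any non-basic edge, so I only need to record what the hypotheses $\H_l=\emptyset$ and $\mathcal{H}'_l=\emptyset$ force on those signs. I would treat the two hypotheses separately, and in each case show that every integer point $x\in\X_l$ with $x\neq x^{*(l)}$ is dominated by $x^{*(l)}$, so that no candidate for $\S$ survives in the remaining domain.

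First, suppose $\H_l=\emptyset$. By the definition (\ref{coup1}), $j\notin\H_l$ means simultaneously that no criterion has a strictly positive reduced gradient in direction $j$ and that the reduced gradients do not all vanish; hence for every $j\in\N_l$ we have $\boldsymbol{\lambda^{i(l)}_j}\le 0$ for all $i\in\{1,\ldots,k\}$, with $\boldsymbol{\lambda^{i_0(l)}_j}<0$ for at least one $i_0$. Now take any integer $x\in\X_l$ with $x\neq x^{*(l)}$; since all non-basic variables vanish at $x^{*(l)}$, at least one index $j\in\N_l$ has $x_j\ge 1$. Feeding this into the identity established in the proof of Theorem~\ref{theo},
\[
Z_{i}(x)-Z_{i}(x^{*(l)})=\boldsymbol{\delta_{j}}\,\frac{\boldsymbol{\lambda^{i(l)}_j}}{z_i^2(x^{*(l)})\,z^2_{i}(x)},
\]
and using $\boldsymbol{\delta_j}>0$ together with the standing positivity of the denominators $z_i^2(\cdot)>0$, I obtain $Z_i(x)\le Z_i(x^{*(l)})$ for all $i$ and $Z_{i_0}(x)<Z_{i_0}(x^{*(l)})$. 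Thus $x$ is dominated by $x^{*(l)}$ in the $(MOILFP)$ sense, i.e. $x\notin\X_E$, so the remaining domain holds no non-dominated point.

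Second, suppose $\mathcal{H}'_l=\emptyset$. By (\ref{coup2}), $j\notin\mathcal{H}'_l$ forces $\boldsymbol{\gamma^{2(l)}_j}\le 0$ for every $j\in\N_l$ and excludes the case $\boldsymbol{\gamma^{2(l)}_j}=\boldsymbol{\gamma^{1(l)}_j}=0$. Because $x^{*(l)}$ maximizes $f_1$ over $\X_l$, its optimality already gives $\boldsymbol{\gamma^{1(l)}_j}\le 0$ for all $j$ and $f_1(x)\le f_1(x^{*(l)})$ for every $x\in\X_l$. For $f_2$ I would reuse the companion identity from Theorem~\ref{theo},
\[
f_{2}(x)-f_{2}(x^{*(l)})=\boldsymbol{\delta_{j}}\,\frac{\boldsymbol{\gamma^{2(l)}_j}}{Q^2(x^{*(l)})\,Q^2(x)},
\]
which yields $f_2(x)\le f_2(x^{*(l)})$. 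Strictness comes from the exclusion above: if $\boldsymbol{\gamma^{2(l)}_j}<0$ then $f_2(x)<f_2(x^{*(l)})$, while if $\boldsymbol{\gamma^{2(l)}_j}=0$ then necessarily $\boldsymbol{\gamma^{1(l)}_j}<0$, and the analogous identity for $f_1$ gives $f_1(x)<f_1(x^{*(l)})$. Either way $x$ is dominated by $x^{*(l)}$ in the $(BOILFP)$ sense, so $x\notin\X_{E'}$ and again the remaining domain holds no non-dominated point.

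The step I expect to require the most care is the passage from the single-pivot identities of Theorem~\ref{theo}, which are written for the edge step $\boldsymbol{\delta_j}$ toward one adjacent vertex, to an arbitrary integer $x$ that may differ from $x^{*(l)}$ in several non-basic coordinates at once. The clean way to close this gap is to lean on the pseudo-concavity of the linear fractional objectives (guaranteed by the positivity of their denominators on $\X$): once every non-basic direction is non-increasing for a given objective, that objective attains its maximum over $\X_l$ at $x^{*(l)}$, so the inequalities $Z_i(x)\le Z_i(x^{*(l)})$ and $f_2(x)\le f_2(x^{*(l)})$ hold for the full point $x$ and not merely along one edge; the strict component is then supplied by whichever direction $j$ with $x_j\ge 1$ carries the strictly negative reduced-gradient entry identified above.
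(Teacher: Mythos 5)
Your proposal is correct and follows essentially the same route as the paper: read off from $\mathcal{H}_l=\emptyset$ (resp. $\mathcal{H}'_l=\emptyset$) that every non-basic direction carries non-positive reduced gradients for all relevant objectives, with at least one strictly negative, and conclude that $x^{*(l)}$ dominates every other integer point of $\mathcal{X}_l$ in the $(MOILFP)$ (resp. $(BOILFP)$) sense. The paper's own proof merely asserts the dominance from these sign conditions, whereas you additionally justify it via the difference identities of Theorem~\ref{theo} and pseudo-concavity of linear fractional objectives; that extra care, in particular your handling of points differing from $x^{*(l)}$ in several non-basic coordinates, is sound and only makes the argument more complete.
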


\proof
\noindent
\begin{itemize}
  \item Assume $\mathcal{H}_l = \emptyset$, then $\forall i \in \{1,\ldots, k\}$, $\forall j \in \mathcal{N}_l$, we have $\overline{\gamma}^i_j \leq  0$ and $\exists i_0 \in\{1,\ldots,r\}$ such that $\overline{\gamma}^{i_0}_j<0, \forall j \in \mathcal{N}_l$. So, $x^{*(l)}$ dominates all points $x$, $x \neq x^{*(l)}$ of domain $\mathcal{D}_l$.
  \item Now, assume that $\mathcal{H}'_l = \emptyset$, then $ \forall j \in \mathcal{N}_l,\  \overline{\lambda}^2_j <  0  \text{ or }  \overline{\lambda}^2_j=0 \text{ and } \overline{\lambda}^1_j < 0 $, adding to that $\overline{\lambda}^1_j <  0,~\forall j \in \mathcal{N}_l$ since it is an optimal solution for $(LP_l)$, $x^{*(l)}$ becomes the most preferred solution  in the domain $\mathcal{D}_l$.
\end{itemize}
\endproof

\begin{theorem}
The algorithm terminates in a finite number of iterations and the set $\mathcal{S}_E$ contains all the solutions of $ B_E $, if such solutions exist.
\end{theorem}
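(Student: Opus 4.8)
The plan is to prove the two assertions in turn: first that the exploration terminates, then that the recorded set $\S$ captures every solution of $(B_E)$. Throughout I would lean on the boundedness of $\X$, on Theorem~\ref{theo}, and on Proposition~\ref{prop}.

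\textbf{Finiteness.} First I would recall that $\X$ is a bounded polytope, so $\D=\X\cap\Z^n$ is a \emph{finite} set. The algorithm spawns two kinds of children. When $(LFP_l)$ has a fractional optimum, the node splits by the standard integer branching $x_r\le\lfloor x_r^{*(l)}\rfloor$ or $x_r\ge\lceil x_r^{*(l)}\rceil$; since every coordinate is bounded, this branching can be iterated only finitely often before each coordinate is pinned to an integer value or the subproblem becomes infeasible, exactly as in ordinary branch-and-bound. When $(LFP_l)$ has an integer optimum $x^{*(l)}$ and the node is not fathomed, the successor $\X_{l_0}$ is formed by appending the cuts $\sum_{j\in\H_l}x_j\ge1$ and $\sum_{j\in\H'_l}x_j\ge1$. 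The decisive observation is that at $x^{*(l)}$ every non-basic variable vanishes, so $\sum_{j\in\H_l}x^{*(l)}_j=0<1$; hence $x^{*(l)}$ is infeasible in $\X_{l_0}$ and can never reappear as an integer optimum deeper in the tree. Each point of the finite set $\D$ can therefore serve as an integer optimum at most once, and combined with the finiteness of each branching subtree this bounds the total number of nodes, so the procedure halts.

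\textbf{Completeness.} Next I would note that the solution set of $(B_E)$ is $\S=\X_E\cap\X_{E'}$, and that the Update step records a found integer point precisely when both efficiency tests return optimal value zero; by the efficiency characterization of Section~2 this is exactly the condition $x^{*(l)}\in\X_E\cap\X_{E'}$, so a point is added iff it genuinely lies in $\S$. It then remains to show that no element of $\S$ is discarded before being recorded. There are two removal mechanisms. For the cuts, Theorem~\ref{theo} asserts that any $x\in\S$ with $x\ne x^{*(l)}$ and $x\in\X_l$ survives into the successor $\X_{l_1}$; thus cutting never deletes a point of $\S$ except the current $x^{*(l)}$, which has already been tested and, if eligible, added. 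For fathoming, Proposition~\ref{prop} guarantees that whenever $\H_l=\emptyset$ or $\H'_l=\emptyset$ the only non-dominated point of the remaining subdomain is $x^{*(l)}$ itself, so no element of $\S$ is lost; and the infeasibility rule removes no feasible point at all.

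\textbf{Conclusion.} Finally I would close by induction on the exploration tree: every $x\in\S$ stays feasible in some active node until the moment it is produced as an integer optimum $x^{*(l)}$ and added to $\S$ by the Update step. Since, by the first part, the algorithm halts after finitely many iterations and stops only when all nodes are fathomed, each element of $\S$ must have been enumerated, giving the claimed containment (and, with the iff of the tests, exactness). The hard part will be the finiteness argument: one must verify that the interleaving of integer branching with the repeatedly inherited efficiency cuts cannot produce an infinite descent. The resolution rests on the two facts above — boundedness of $\X$ forcing each branching subtree to be finite, and each cut strictly excluding its own integer optimum — but making the bookkeeping fully rigorous, in particular that a node's subtree remains finite even after it inherits accumulated cuts, is the delicate point.
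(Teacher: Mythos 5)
Your proof is correct and follows the same overall decomposition as the paper's: termination from the finiteness of $\D$, and completeness from Theorem~\ref{theo} (the cuts lose no element of $\S$ other than the already-tested $x^{*(l)}$) together with Proposition~\ref{prop} (fathoming loses nothing) and the efficiency-test characterization (a point is recorded iff it lies in $\X_E\cap\X_{E'}$). Where you genuinely add something is the termination argument. The paper merely observes that $\X_E$, $\X_{E'}$ and $\S$ are finite and concludes that the search tree has finitely many branches; this does not follow on its own, because the tree also visits integer points that are not efficient and could, a priori, revisit them. Your argument supplies the missing mechanism: at the current basic optimal solution $x^{*(l)}$ every non-basic variable vanishes, so $\sum_{j\in\H_l}x^{*(l)}_j=0<1$ and the cut renders $x^{*(l)}$ infeasible in its successor (and sibling subtrees created by fractional branching are pairwise disjoint), hence each of the finitely many points of $\D$ can occur as an integer optimum at most once, while between two consecutive integer optima the fractional branching terminates by boundedness of $\X$. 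You are also right to flag the interleaving of branching with inherited cuts as the delicate bookkeeping step; the paper does not address it at all. The completeness halves of the two proofs are essentially identical.
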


\proof
Let $\mathcal{D}$, the set of integer feasible solutions of $MOILFP$ problem, be a finite bounded set contained in $\mathcal{X}$. The cardinality of efficient sets $\mathcal{S}_E$ and $\mathcal {X}_{E}, \mathcal {X}_{E'}$ is a finite number. It contains a finite number of integer solutions. So the search tree would have a finite number of branches. Thus the algorithm terminates in a finite number of steps.\\
For $\mathcal{S}_E$ to contain all the solutions of $(B_E)$, the fathoming rules are used  without loss of any elements in $\mathcal{S}_E$.  At each step $l$ of the algorithm \ref{algo}, if an integer solution $x^{*(l)}$ is found, the cuts  eliminate $x^{*(l)}$ and all dominated solutions from search (see proposition \ref{prop}).

So, the first fathoming  rule is when the set $\mathcal{H}_l$ or $\mathcal{H'}_l$ is empty. In this case, the current node can be pruned since the rest of the domain contains only dominated solutions, either in terms of the multiobjective program or in terms of $ (BIOLFP)$ .
 The second rule is the trivial case when the reduced domain becomes infeasible, whether it is because of previous cuts or the branching.
\endproof

\section{Illustrative example}
Let us consider the optimization of  two utility functions:

\[ f_1(x)=\dfrac{-x_1+x_2-3}{2x_1+x_1+1}  \ \text{ and }f_2(x)=\dfrac{-4x_1+3x_2+1}{2x_1+x_2+2}\ \]
over the efficient set $\X_E$ of the $(MOILFP)$ presented in \citep{KS81}:

\[\left\{
\begin{array}{lll}
\max  \dfrac{x_1-4}{-x_2+2}\\ \noalign{\smallskip}
\max \dfrac{-x_1+4}{x_2+1} \\ \noalign{\smallskip}
\max -x_1+x_2\\ \noalign{\smallskip}
s.t.\\
-x_1+4x_2  \leq 0\\
2x_1-x_2 \leq 8 \\

x_1\geq 0 ,~ x_2\geq 0 \text{ and integers.}
\end{array}\right.\]
To solve this problem using the described algorithm, the steps below will be followed:

\begin{description}
	\item[Initialization] Firstly we initialise $l=0$, $\mathcal{S}_E=\emptyset$ and $\mathcal{X}_0=\mathcal{X}$.

	the forward iterations consist in solving the problem attached to the node, so for the node 0 we will solve $(LFP_0)$:
	\[(LFP_0)
	\left\{
	\begin{array}{lll}
	\max f_1(x)=\dfrac{-x_1+x_2-3}{2x_1+x_1+1}  \\
	s.t.\\
	x\in\X_0

	\end{array}
	\right.
	\]
	
	Since the method of solution of $(LFP_0)$ is a simplex procedure it gives a final simplex table, from which we will analyse the results according to the described algorithm.

	\item[Node 0] solving $(LFP_0)$ gives the final simplex Table \ref{node0} :

\begin{table}[H] \centering
\caption {Optimal simplex table for node 0.}
\begin{tabular}{lccccr} \toprule
$\B_{0}$  &&$x_{3}$& $x_{4}$& &RHS \\
\midrule
$x_{1}$   &&1/7&4/7& &32/7\\
$x_{2}$   &&2/7& 1/7& &8/7\\
\midrule
$\nu^{1(0)}$&&-1/7  & 3/7  & &-45/7\\
  $\mu^{1(0)}$&&-4/7 & -9/7  & &79/7\\
  $\boldsymbol{\gamma^{1(0)}}$&&-37/7&  -24/7&  &-45/79\\
 \midrule
  $\nu^{2(0)}$&&-2/7  & 13/7  &&-97/7 \\
  $\mu^{2(0)}$&&-4/7  & -9/7  &&86/7\\
  $\boldsymbol{\gamma^{2(0)}}$&&-80/7 &  5 &  &-97/86\\
  \midrule
$\eta^{1(0)}$&&-1/7 &-4/7 &&4/7\\ \noalign{\smallskip}
$\vartheta^{1(0)}$&&2/7 &1/7 &&6/7\\ \noalign{\smallskip}
$\boldsymbol{\lambda^{1(0)}}$&&-2/7& -4/7& &2/3\\ \midrule

$\eta^{2(0)}$&&1/7 &4/7 &&-4/7\\ \noalign{\smallskip}
$\vartheta^{2(0)}$&&-2/7 &-1/7 &&15/7\\ \noalign{\smallskip}
$\boldsymbol{\lambda^{2(0)}}$&&1/7& 8/7& &-4/15\\ \midrule

$\eta^{3(0)}$&&-1/7 &3/7 &&-24/7\\ \noalign{\smallskip}
$\vartheta^{3(0)}$&&0 &0 &&1\\ \noalign{\smallskip}
$\boldsymbol{\lambda^{3(0)}}$&&-1/7& 3/7& &-24/7\\
\bottomrule

\end{tabular}
\label{node0}	
\end{table}

	Since the optimal solution $(\frac{32}{7},\frac{8}{7})$ is not  integer, the problem is divided into two sub-problems by adding constraints $ x_1\leq \left\lfloor \frac{32}{7}\right\rfloor $ and $x_1\geq \left\lceil \frac{32}{7}\right\rceil $ respectively to $\X_0$, and we obtain the two programs :
	
	\[\begin{array}{ll}
	(LFP_1)\left\{\begin{array}{ll}
\max f_1(x)=\dfrac{-x_1+x_2-3}{2x_1+x_1+1}  \\
s.t.\\
x\in \mathcal{X}_0\\
x_1\leq 4\\
\end{array}\right.
&(LFP_2)\left\{\begin{array}{ll}
\max f_1(x)=\dfrac{-x_1+x_2-3}{2x_1+x_1+1} \\
s.t.\\
x\in \mathcal{X}_0\\
x_1\geq 5\\
\end{array}\right.
	\end{array}
	\]

\item[Node 1] Solving the problem $(LFP_1) $ gives  the following Table \ref{node1}:	

\begin{table}[H] \centering
\caption {Optimal simplex table for node 1.}
\begin{tabular}{lccccr} \toprule
$\B_{1}$  &&$x_{3}$& $x_{5}$& &RHS \\
\midrule
$x_{1}$   &&0& 1& &4\\
$x_{2}$   &&1/4& 1/4& &1\\
$x_{4}$   &&1/4& -7/4& &1\\
\noalign{\smallskip}  \hline \noalign{\smallskip}
$\nu^{1(1)}$&&-1/4 &3/4 &&-6\\ \noalign{\smallskip}
$\mu^{1(1)}$&&-1/4 &-9/4 &&10\\ \noalign{\smallskip}
$\boldsymbol{\gamma^{1(1)}}$&&-4&-6& &-3/5\\
\midrule
$\nu^{2(1)}$&&-3/4 &13/4 &&-12\\ \noalign{\smallskip}
$\mu^{2(1)}$&&-1/4 &-9/4 &&11\\ \noalign{\smallskip}
$\boldsymbol{\gamma^{2(1)}}$&&-45/4&35/4& &-12/11\\ \midrule

$\eta^{1(1)}$&&0 &-1&&0\\ \noalign{\smallskip}
$\vartheta^{1(1)}$&&1/4 &1/4 &&1\\ \noalign{\smallskip}
$\boldsymbol{\lambda^{1(1)}}$&&0& -1& &0\\
\midrule
  $\eta^{2(1)}$&& 0  & 1  & &0\\
  $\vartheta^{2(1)}$&&-1/4 & -1/4  & &2\\
  $\boldsymbol{\lambda^{2(1)}}$&&0 &  2&  &0\\
\midrule
  $\eta^{3(1)}$&&-1/4 & 3/4  & &-3 \\
  $\vartheta^{3(1)}$&& 0   & 0  & &1\\
  $\boldsymbol{\lambda^{3(1)}}$&&-1/4 & 3/4 &  &-3\\
\bottomrule
\end{tabular}
	\label{node1}
\end{table}

	The optimal solution found $(4,1)$ is integer, according to the algorithm we start by updating $\S$, since this set is empty $\S=\{(4,1)\}$  then we calculate $\H_1$ and $\H_1'$: From the Table \ref{node1} we find $\H_1=\{5\}$ and $\H_1'=\{5\}$. So the Node 0 has one successor Node 3 with $\X_3=\X_1\cap\{x_5\geq 1\}$.

	\item[Node 2] The program $(LFP_2)$ is infeasible.\\

	\item[Node 3] After  solving the  corresponding  linear fractional program, it gives Table \ref{node3}:
\begin{table}[H] \centering
\caption {Optimal simplex table for node 3.}
\begin{tabular}{lccccr} 	\toprule
$\B_{3}$  &&$x_{3}$& $x_{6}$& &RHS \\
\midrule
$x_{1}$   &&0& 1& &3\\
$x_{2}$   &&1/4& 1/4& &3/4\\
$x_{4}  $ &&1/4& -7/4& &11/4\\
$x_{5}  $ &&0& -1& &1\\
\midrule
  $\nu^{1(3)}$&&-1/4  & 3/4  & &-21/4\\
  $\mu^{1(3)}$&&-1/4  & -9/4  & &31/4\\
  $\boldsymbol{\gamma^{1(3)}}$&&-13/4 &  -6 &  &-21/31\\
 \midrule
 $\nu^{^2(3)}$ &&-3/4   & 13/4    &&-35/4 \\
  $\mu^{2(3)}$&&-1/4  & -9/4  && 35/4\\
  $\boldsymbol{\gamma^{2(3)}}$&&-35/4 &35/4  &&-1\\
\midrule
$\eta^{1(3)}$&& 0 & -1 &&-1\\
$\vartheta^{1(3)}$&&1/4 &1/4 &&5/4\\
$\boldsymbol{\lambda^{1(3)}}$&&1/4 & -1 & &-4/5\\ \midrule

$\eta^{2(3)}$&& 0  &1  &&1\\
$\vartheta^{2(3)}$&&-1/4 &-1/4 &&7/4\\
$\boldsymbol{\lambda^{2(3)}}$&&1/4 & 2 & &4/7\\ \midrule

$\eta^{3(3)}$&&-1/4 &3/4 &&-9/4\\
$\vartheta^{3(3)}$&& 0  & 0  &&1\\
$\boldsymbol{\lambda^{3(3)}}$&&-1/4 & 3/4 & &-9/4\\

\bottomrule
\end{tabular}	\label{node3}
\end{table}

	The optimal solution $(3,\frac{3}{4})$ is  not integer so the constraints  $x_2\leq \left\lfloor \frac{3}{4}\right\rfloor$and $x_1\geq \left\lceil \frac{3}{4}\right\rceil$  are added  respectively to  Table \ref{node3} and the nodes 4 and 5 are created.

	\item[Node 4] Solving the problem gives Table \ref{node4}:

\begin{table}[H] \centering
\caption {Optimal simplex table for node 4.}
\begin{tabular}{lccccr} 	\toprule
$\B_{4}$  &&$x_{6}$& $x_{7}$& &RHS \\
\midrule
$x_{1}$   &&0& 1 & &3\\
$x_{2}$   &&1& 0& &0\\
$x_{3}  $ &&-4& -1 & &3\\
$x_{4}  $ &&1 & -2 & &2\\
$x_{5}  $ && 0 & -1& &1\\
\midrule
  $\nu^{1(3)}$&& -1  & 1 & &-6 \\
  $\mu^{1(3)}$&&-1  & -2  & & 7 \\
  $\boldsymbol{\gamma^{1(3)}}$&&-13&  -5 &  &-6/7\\
\midrule
 $\nu^{^2(3)}$ &&-3   & 4    &&-11 \\
  $\mu^{2(3)}$&&-1  & -2  && 8 \\
  $\boldsymbol{\gamma^{2(3)}}$&&-35 &10  &&-11/8\\
\midrule
$\eta^{1(3)}$&&0 &-1 &&-1\\
$\vartheta^{1(3)}$&& 1 & 0 &&2 \\
$\boldsymbol{\lambda^{1(3)}}$&& 1 &  -2 & &-1/2\\ \midrule

$\eta^{2(3)}$&& 0  & 1  &&1\\
$\vartheta^{2(3)}$&&-1 & 0  &&1\\
$\boldsymbol{\lambda^{2(3)}}$&& 1 &  1 & &1\\ \midrule

$\eta^{3(3)}$&&-1 &1  &&-3\\ \noalign{\smallskip}
$\vartheta^{3(3)}$&& 0 & 0 &&1\\ \noalign{\smallskip}
$\boldsymbol{\lambda^{3(3)}}$&& -1 &  1 & &-3\\
\bottomrule
\end{tabular}
\label{node4}
\end{table}	
The optimal solution found $(3,0)$ is integer, this solution is dominated by the solution $(4,1)$ then  $\S=\{(4,1)\}$. From the Table \ref{node4} we find $\H_4=\{6,7\}$ and $\H_4'=\{6\}$, so the Node 4 has one successor Node 6  with $\X_6=\X_4\cap\{x_6+x_7\geq 1,x_6\geq1\}$.  	
	
	\item[Node 5] The corresponding problem is infeasible and the node is fathomed.

	\item[Node 6] By adding the constraints $x_6\geq 1$ and $x_6+x_7\geq1 $  to Table \ref{node4} and solving, we obtain Table \ref{node6}.
	
\begin{table}[H] \centering
\caption {Optimal simplex table for node 6.}
\begin{tabular}{lccccr} 	\toprule
$\B_{6}$  &&$x_{7}$& $x_{9}$& &RHS \\
\midrule
$x_{1}$   && 0 & 1& &2\\
$x_{2}$   &&1 & 0& &0\\
$x_{3}  $ &&-4& 1& &2\\
$x_{4}  $ &&1 & -2& &4\\
$x_{5}  $ && 0& -1& &2\\
$x_{6}  $ &&-0& -1& &1\\
$x_{8}  $ &&-1& -1 & &0\\
\midrule
  $\nu^{1(6)}$&&-1  & 1  & &-5\\
  $\mu^{1(6)}$&&-1  & -2 & &5\\
  $\boldsymbol{\gamma^{1(6)}}$&&-10 &  -5 &  &-1\\
\midrule
  $\nu^{2(6)}$&& -3  & 4  & &-7 \\
  $\mu^{2(6)}$&&-1  &  -2   & &6\\
  $\boldsymbol{\gamma^{2(6)}}$&&-25 &  10 &  &-7/6\\
\midrule

$\eta^{1(6)}$&&0  & -1  &&-2\\
$\vartheta^{1(6)}$&&1  & 0 &&2\\
$\boldsymbol{\lambda^{1(6)}}$&&2 & -2 & &-1\\
\midrule

$\eta^{2(6)}$&& 0  &1 &&2\\
$\vartheta^{2(6)}$&&-1 & 0 &&1\\
$\boldsymbol{\lambda^{2(6)}}$&&2 & 1 & &2\\ \midrule

$\eta^{3(6)}$&&-1 &1 &&-2\\
$\vartheta^{3(6)}$&& 0  & 0 &&1\\
$\boldsymbol{\lambda^{3(6)}}$&&-1 & 1& &-2\\
\bottomrule
\end{tabular}	
\label{node6}
\end{table}
	The optimal solution found $(2,0)$ is integer, this solution is  dominated by the solution $(4,1)$ then  $\S=\{(4,1)\}$. From the Table \ref{node6} we find $\H_6=\{7,9\}$ and $\H_6'=\{9\}$, so the Node 6 has one successor Node 7  with $\X_7=\X_6\cap\{x_7+x_9\geq 1,x_9\geq1\}$.

\item[Node 7] Solving the corresponding problem  produces Table \ref{node7}:

\begin{table}[H] \centering
\caption {Optimal simplex table for node 7.}
\begin{tabular}{lccccr} \toprule
$\B_{7}$  &&$x_{7}$& $x_{10}$& &RHS \\
\midrule
$x_{1}$   &&0& 1& &1\\
$x_{2}$   &&1& 0& &0\\
$x_{3}$   &&-4& 1& &1\\
$x_{4}  $ &&1& -2& &6\\
$x_{5}  $ &&0& -1& &3\\
$x_{6}  $ &&0& -18& &2\\
$x_{8}  $ &&0& -1& &1\\
$x_{9}  $ &&-1& -1& &1\\
$x_{11}  $ &&-1& -1& &0\\
\midrule
$\nu^{1(7)}$&&-1 &1 &&-4\\
$\mu^{1(7)}$&&-1 &-2 &&3\\
$\boldsymbol{\gamma^{1(7)}}$&&-7& -5& &-4/3\\ \midrule

$\nu^{2(7)}$&&-3 &4 &&-3\\
$\mu^{2(7)}$&&1 &-2 &&4 \\
$\boldsymbol{\gamma^{2(7)}}$&&-15&10& &-3/4\\ \midrule

  $\eta^{1(7)}$&& 0  & -1  & &-3\\
  $\vartheta^{1(7)}$&&1  & 0  & &2\\
  $\boldsymbol{\lambda^{1(7)}}$&&3 &  -2 &  &-3/2\\ \midrule
  $\eta^{2(7)}$&&0  & 1  & &3 \\
  $\vartheta^{2(7)}$&&-1  & 0  & &1\\
  $\boldsymbol{\lambda^{2(7)}}$&&3 &  1 &  &3\\
 \midrule
	$\eta^{3(7)}$&&-1 &1 &&-1\\
	$\vartheta^{3(7)}$&&0 &0 &&1\\
	$\boldsymbol{\lambda^{3(7)}}$&&-1& 1& &-1\\
\bottomrule

\end{tabular}
\label{node7}	
\end{table}
	The optimal solution found $(1,0)$ is integer, this solution is  not dominated by the solution $(4,1)$ so  $\S=\{(4,1),(1,0)\}$. From the Table \ref{node7} we find $\H_7=\{7,10\}$ and $\H_7'=\{10\}$, so the Node 7 has one successor Node 8  with $\X_8=\X_7\cap\{x_7+x_{10}\geq 1,x_{10}\geq1\}$.

\item[Node 8] Solving the corresponding problem  produces Table \ref{node8}:
\begin{table}[H] \centering
\caption {Optimal simplex table for node 8}
\begin{tabular}{lccccr} 	\toprule
$\B_{8}$  &&$x_{7}$& $x_{12}$& &RHS \\
\midrule
$x_{1}$   &&0& 1& &0\\
$x_{2}$   &&1& 0& &0\\
$x_{3}  $ &&-4& 1& &0\\
$x_{4}  $ &&1& -2& &8\\
$x_{5}  $ &&0& -1& &4\\
$x_{6}  $ &&0& -1& &3\\
$x_{8}  $ &&0& -1& &2\\
$x_{9}  $ &&-1& -1& &2\\
$x_{10}  $ &&0& -1& &1\\
$x_{11}  $ &&-1& -1& &1\\
$x_{12}  $ &&-1& -1& &0\\
\midrule
$\nu^{1(8)}$&&-1&1 &&-3\\
$\mu^{1(8)}$&&-1 &-2 &&1\\
$\boldsymbol{\gamma^{1(8)}}$&&-4&-5& &-3\\ \midrule

$\nu^{2(8)}$&&-3 &4 &&1\\
$\mu^{2(8)}$&&-1 &-2 &&2\\
$\boldsymbol{\gamma^{2(8)}}$&&-5& 10& &1/2\\

\midrule
  $\eta^{1(8)}$&& 0  & -1  & &-4\\
  $\vartheta^{1(8)}$&&1  & 0  & &2\\
  $\boldsymbol{\lambda^{1(8)}}$&&-4 &  -2 &  &-2\\
\midrule
  $\eta^{2(8)}$&&0  & 1  & &4 \\
  $\vartheta^{2(8)}$&&-1  & 0  & &1\\
  $\boldsymbol{\lambda^{2(8)}}$&&4 &  1 &  &4\\
 \midrule
  $\nu^{3(8)}$&&-1 &1 &&0\\
$\mu^{3(8)}$&&0 &0 &&1\\
$\boldsymbol{\lambda^{3(8)}}$&&-1& 1& &0\\
\bottomrule
\end{tabular}	
\label{node8}
\end{table}

\noindent The optimal solution found $(0,0)$ is integer, this solution is  not dominated by the either   $(4,1)$ or $(0,0)$, it doesn't dominate both   so  $\S=\{(4,1),(1,0),(0,0)\}$. From the Table \ref{node8} we find $\H_8=\{7,13\}$ and $\H_8'=\{13\} $, so the Node 8 has one successor Node 9   with $\X_9=\X_8\cap\{x_7+x_{10}\geq 1,x_{13}\geq1\}$

	\item[Node 9] The corresponding problem is infeasible and the node is fathomed.

\end{description}

\noindent There remains no nodes so the search is finished with the solution set $\S=\{(4,1),(1,0),(0,0)\}$. The efficient set of this problem is $\X_E=\{(4,1),(3,0),(2,0),(1,0),(0,0)\}$

 The search tree is presented in Figure \ref{tree}.

 \begin{figure}[H]
 \tiny
\begin{center}
\begin{tikzpicture}[xscale=0.8,yscale=0.6]
\tikzstyle{fleche}=[->,>=latex,thick]
\tikzstyle{noeud}=[fill=white,circle,draw]
\tikzstyle{feuille}=[fill=lightgray,circle,draw]
\tikzstyle{etiquette}=[midway,fill=white]
\tikzstyle{commentaire}=[left]
\tikzstyle{scommentaire}=[below,black]
\tikzstyle{com}=[right]
\tikzstyle{comi}=[right,blue]
\def\DistanceInterNiveaux{1.5}
\def\DistanceInterFeuilles{4}
\def\NiveauA{(-0)*\DistanceInterNiveaux}
\def\NiveauB{(-1)*\DistanceInterNiveaux}
\def\NiveauC{(-2.7)*\DistanceInterNiveaux}
\def\NiveauD{(-4)*\DistanceInterNiveaux}
\def\NiveauE{(-5.700000000000001)*\DistanceInterNiveaux}
\def\NiveauF{(-7.5)*\DistanceInterNiveaux}
\def\NiveauG{(-9.5)*\DistanceInterNiveaux}
\def\NiveauH{(-11.5)*\DistanceInterNiveaux}
\def\InterFeuilles{(1)*\DistanceInterFeuilles}

\node[noeud] (R) at ({(2.3)*\InterFeuilles},{\NiveauA}) {$0 $};
\draw (R) node[com] at ({(1.7)*\InterFeuilles},{\NiveauA}) {$(\frac{32}{7},\frac{8}{7})$};
\draw (R) node[commentaire] at ({(2.9)*\InterFeuilles},{(1.2)*\NiveauA}) {$\S=\emptyset$};

\node[noeud] (Ra) at ({(1.5)*\InterFeuilles},{\NiveauB}) {$1$};
\draw (Ra) node[comi] at ({(1)*\InterFeuilles},{\NiveauB}) {$(4,1)$};
\draw (Ra) node[commentaire] at ({(2.7)*\InterFeuilles},{(1)*\NiveauB}) {$\small \S=\S\cup\{(4,1)\}$};

\node[noeud] (Raa) at ({(1.5)*\InterFeuilles},{\NiveauC}) {$3$};
\draw (Ra) node[com] at ({(1))*\InterFeuilles},{\NiveauC}) {$(3,\frac{3}{4})$};

\node[noeud] (Raaa) at ({(0.5)*\InterFeuilles},{\NiveauD}) {$4$};
\draw (Raaa) node[com] at ({(0)*\InterFeuilles},{\NiveauD}) {$(3,0)$};

\node[noeud] (Raaaa) at ({(0.5)*\InterFeuilles},{\NiveauE}) {$6$};
\draw (Raaaa) node[com] at ({(0)*\InterFeuilles},{\NiveauE}) {$(2,0)$};

\node[noeud] (Raaaaa) at ({(0.5)*\InterFeuilles},{\NiveauF}) {$7$};
\draw (Raaaaa) node[comi] at ({(0)*\InterFeuilles},{\NiveauF}) {$(1,0)$};
\draw (Raaaaa) node[commentaire] at ({(1.7)*\InterFeuilles},{(1)*\NiveauF}) {$\small \S=\S\cup\{(1,0)\}$};

\node[noeud] (Raaaaaa) at ({(0.5)*\InterFeuilles},{\NiveauG}) {$8$};
\draw (Raaaaa) node[comi] at ({(0)*\InterFeuilles},{\NiveauG}) {$(0,0)$};
\draw (Raaaaa) node[commentaire] at ({(1.7)*\InterFeuilles},{(1)*\NiveauG}) {$\small \S=\S\cup\{(0,0)\}$};

\node[feuille] (Raaaaaaa) at ({(0.5)*\InterFeuilles},{\NiveauH}) {$9$};

\draw (Raaaaaaa.south) node[scommentaire] {$~Infeasible$};

\node[feuille] (Raab) at ({(2.5)*\InterFeuilles},{\NiveauD}) {$5$};
\draw (Raab.south) node[scommentaire] {$Infeasible$};

\node[feuille] (Rb) at ({(3)*\InterFeuilles},{\NiveauB}) {$2$};
\draw (Rb.south) node[scommentaire] {$Infeasible$};
\draw[fleche] (R)--(Ra) node[etiquette] {$x_1\leq4$};

\draw[fleche] (Ra)--(Raa) node[etiquette] {$x_5\geq1$};

\draw[fleche] (Raa)--(Raaa) node[etiquette] {$x_2\leq 0$};
\draw[fleche] (Raaa)--(Raaaa) node[etiquette] {$x_6+x_7\geq1,~x_6\geq1$};
\draw[fleche] (Raaaa)--(Raaaaa) node[etiquette] {$x_{9}+x_7\geq1,~x_{9}\geq1$};
\draw[fleche] (Raaaaa)--(Raaaaaa) node[etiquette] {$x_{10}+x_7\geq1,~x_{10}\geq1$};

\draw[fleche] (Raaaaaa)--(Raaaaaaa) node[etiquette] {$x_{13}+x_7\geq1,~x_{13}\geq1$};

\draw[fleche] (Raa)--(Raab) node[etiquette] {$x_2\geq1$};

\draw[fleche] (R)--(Rb) node[etiquette] {$x_1\geq5$};
\end{tikzpicture}
\end{center}
 \caption{ Search tree of the example}
	 \label{tree}
 \end{figure}
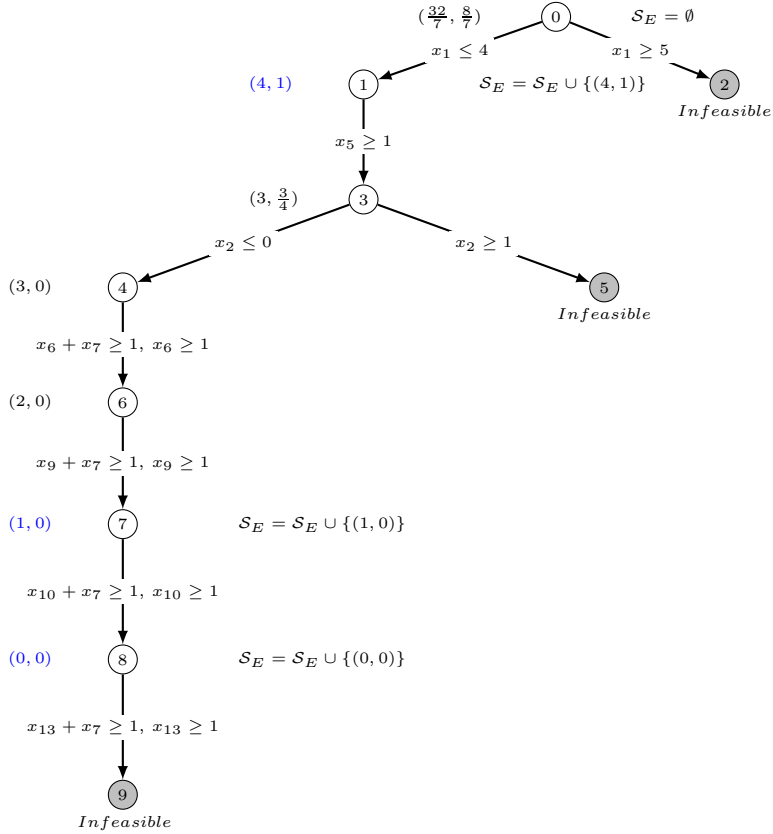
\section{Computational results}
The proposed branch-and-cut algorithm is implemented in  Matlab r2017a. The linear and integer linear programs  are solved  using  the library IBM CPLEX 12.9 for Matlab.
To perform tests we have used a computer with an Intel i7 5500u processor and $8GB$ of memory.

The method is tested on randomly generated $(MOILFP)$ with two randomly generated utility linear fractional functions. The objective functions and constraints coefficients are uncorrelated  uniformly distributed. Each  component of the vector  $b$ and the entries of the matrices A were randomly drawn from discrete uniform distributions in the ranges  [50, 100], [1, 30] respectively. The vectors $p^1,~ p^2$ and $c^i,~i=\overline{1,k}$ and the scalars $\alpha^1,\alpha^2$ and $c^i_0$ are randomly drawn from discrete uniform distributions in the range [-10, 10], while the vectors $q^1,~ q^2$ and $d^i,~i=\overline{1,k}$ and the scalars $\beta^1,\beta^2$ and $d^i_0$ are generated randomly from discrete uniform distributions in the range  [0, 10] to avoid the indefinite case of the null denominator. Also, to avoid infeasibility, all the constraints of each problem are of the $\leq$  type. Furthermore, since all coefficients of A are positive, a bounded feasible region is assured. \\

The problems were grouped according to the number of variables, constraints and objective functions into six categories. In each category the number of objective functions $r$=3,5,7. For each category of problems, 10 instances were solved. \\

The computational results obtained are summarized in Table \ref{tab}. The statistics of the $CPU$ time (in seconds) and the number of nodes are reported. The  last column $\mu$  refers to the average of $\left|\mathcal{X}_E\right|$, while $n$ and $m$ refer to the number of variables and constraints respectively. The cases where the execution time required to find the efficient set $\mathcal{X}_E$ is too high are noted $-$.


\begin{table}[H] \centering
\centering
\caption {Random instances execution time}
\begin{tabular}{ll|lll|lll|l}\toprule

 \multicolumn{2}{l}{ }& \multicolumn{3}{l}{\text{CPU time (second)}} &  \multicolumn{3}{l}{\text{Number of nodes}} & \\  \cline{3-8}

$r\times$ &$m\times n$  &Mean & Max. &Min.  &Mean & Max. &Min. &$\boldsymbol{\mu}$ \\

\midrule
	
3	&10$\times$ 5 &3.20&7.70&0.18& 250 & 600 &10 & 28 \\
	&10$\times$ 10&1.64 &9.20 &0.1 &117.8 & 660 & 3 & 22.3  \\
	&25$\times$ 20 & 13.51 & 28.66& 0.45 & 1257.1 & 2656 & 40 & 26.5\\
	&30$\times$25 & 12.29 & 25.35 & 3.83  & 1167.7 & 2242 & 336 & 26 \\
	&35$\times$30 & 20.25 & 47.89 & 0.44 & 2182.7 & 5892 & 43 & 33.7 \\
	&40$\times$35 & 14.29 & 46.48 & 0.67 &  1862.8 & 5804 & 71 & 37.6  \\ \vspace{0.15cm}
	&45$\times$40 & 22.74 & 54.15 & 1.33  & 3193.2  & 7325  & 81 & 29.2 \\
	&50$\times$ 25&46.25&109.31&10.57&6061.5&13081&1405& -\\
&60$\times$ 30&61.87&222.27&11.70&8012.6&28371&1675& - \\
&70$\times$ 35&162.81&373.31&3.35&20018.5&42385&376 & - \\
&80$\times$ 40&163.27&434.77&0.83&21606.8&52783&35& -\\
&90$\times$ 45&164.48&343.94&4.40&21545.1&42653&592& -\\
&100$\times$ 50&208.17&462.57&45.30&29060.2&60184&5880& - \\
&120$\times$ 60&451.76&1127.12&14.98&54032.4&128250&2106& -\\
&140$\times$ 70&608.84&1015.43&159.95&68329.22&115096&19820& -\\ \vspace{0.15cm}
&160$\times$ 80&517.30&1015.43&6.92&56799.22&115096&596& - \\

5	&5$\times$ 5    &0.57  & 1.81 &0.06 &55.4   &191& 3 & 39.3  \\
    &10$\times$ 5   & 2.15 & 6.23 &0.11 &262.44 & 725& 8 & 149 \\
    &10$\times$ 10  &1.72  & 3.96 &0.31 &218.9  &496 & 38 & 42.5  \\
    &20$\times$ 20  &4.43  & 12.08&0.48 & 571.7 & 1463 & 65 & 109.4 \\
    &30$\times$ 30  &11.01 & 38.81&2.23 & 1477  & 5250 & 281 & 138 \\
    &40$\times$ 40  & 18.84 & 46.37&0.22 & 2138  & 4987 & 16 & 125 \\ \vspace{0.15cm}
    &50$\times$ 50  &  41.88 & 103.93 & 1.79 & 4677.5 & 10157 & 236 & 162.5 \\
 	
7	&5$\times$ 5    &0.26  & 0.56 &0.06 &19.5   &32 &4 & 36.7  \\
    &10$\times$ 5   &2.45  & 6.98 &0.06& 309.1  &824&3 & 221.7 \\
    &10$\times$ 10  &1.40  &2.74 & 0.15 & 199 & 381 &  18 & 110.5  \\
    &20$\times$ 20  &6.00 & 18.57& 0.52 & 958 & 3104 & 60 & 190.9  \\
    &30$\times$ 30  &6.71 & 12.86 & 1.34 & 953.4 & 1733 & 200 &174.3 \\
    &40$\times$ 40  & 30.34 & 54.32 & 17.82 & 4387.5 & 8030 & 2635 & 191.4 \\ \vspace{0.15cm}
    &50$\times$ 50  &  33.4 & 71.02 & 2.34 & 4591 & 9401 & 293 & 243.6 \\
   	 &50$\times$ 25&77.31&163.62&14.05&10553.5&20943&2146& -\\
&60$\times$ 30&55.75&164.07&2.008&8314.1&24890&351 & -\\
&70$\times$ 35&93.86&284.15&0.404&12448.8&32799&40 & - \\
&80$\times$ 40&176.06&350.88&40.98&22950.1&48044&6468& - \\
&90$\times$ 45&243.70&773.29&4.33&28888.8&88885&585 & -\\
&100$\times$ 50&269.34&541.84&51.32&26847.9&52410&5782 & -\\
&110$\times$ 55&517.56&847.28&163.40&51484.9&83932&15147 & -\\
&120$\times$ 60&352.08&899.07&9.58&33535.2&79358&894 & -\\
&130$\times$ 65&546.83&1522.45&19.69&50156&133845&1873 & - \\
&140$\times$ 70&799.93&2243.55&45.90&65041.6&175900&5006 & -\\

	\bottomrule

\end{tabular}
\label{tab}
\end{table}

From the computational experiments shown in Table \ref{tab}, we observe that the proposed method solves small and medium size problems in a reasonable amount of time.

\begin{figure}[H]	
\centering
\begin{tikzpicture}
\tiny
\begin{groupplot} [
  group style={group size=3 by 1},
   height = 4.5cm,
     width = 4.5cm,
  legend pos= north west,
  xmax=11,
  ymax=550,
  xtick={1,...,10},
   xticklabel style={rotate=90,anchor=east}]

]

\nextgroupplot[title= 3 objectives,xticklabels={5x5,10x5,10x10,20x10,25x20,30x25,35x30,40x35,45x40}]

\addplot
coordinates {
   (1,0.60895)
    (2,3.2037)
    (3,1.6374)
    (4,6.3881)
    (5,13.5116)
    (6,12.2906)
    (7,20.2525)
    (8,14.2972)
    (9,22.7486)

    };
 \addplot
    coordinates {
   (1,0.73808)
    (2,6.8457)
    (3,3.2471)
    (4,243.2381)
    (5,33.8309)
    (6,79.5084)
    (7,116.4335)
    (8,153.3144)
    (9,472.8505)
    };

  \legend{Our Method , Brute Force}

\nextgroupplot[title= 5 objectives , xticklabels={5x5,10x5,10x10,20x20,25x20,30x30,35x30,40x40,45x40}]

\addplot
coordinates {
    ( 1,  0.5783)
   (2   , 3.5450)
  ( 3   ,1.7200)
  ( 4  ,  4.4341)
  ( 5 , 10.3784)
   (6  , 11.0101)
  ( 7  , 15.7027)
  ( 8 , 18.8444)
  ( 9  , 39.0803)
   (10  , 41.8870)
    };
 \addplot
    coordinates {
   (1  ,  0.4811)
   (2 ,   5.9393)
   (3 ,   1.3746)
   (4  , 17.9894)
   (5  , 47.8215)
   (6  , 98.0976)
   (7 , 272.3155)
   (8, 737.8233)
   (9 , 643.3598)

    };

  \legend{Our Method , Brute Force}

 \nextgroupplot[title = 7 objectives,xmax=7,xticklabels={5x5,10x5,10x10,20x20,30x30,40x40}]

\addplot
coordinates {
   (1,0.26135)
    (2,2.4521)
    (3,1.4045)
    (4,6.0029)
    (5,6.713)
    (6,30.4359)
    };
 \addplot
    coordinates {
  (1,0.27053)
    (2,5.8503)
    (3,2.6275)
    (4,118.5281)
    (5,323.7137)
    (6,390.001)
    };

  \legend{Our Method ,Brute Force}

\end{groupplot}

\end{tikzpicture}
	\label{comfig}
\caption{Execution time comparison}
\end{figure}
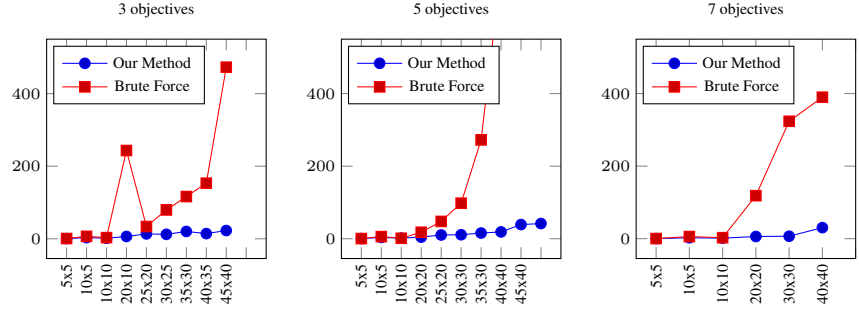

In (Figure \ref{comfig}) summarises the average execution time using our method and the Brute Force method, which consists in finding the two efficient sets $\X_E$ and $ \X_E' $  using the algorithm presented in \citep{CM08}, and then finding the intersection between the two.
Notice that in case where $S_E$ is empty the problem has been replaced by another one.
We observe that our proposal is faster than Brute Force with 3, 5 and 7 objectives, especially when the number of variables and constraints increases.
In fact, when the number of variables and constraints is smaller (5x5, 10x5, 10x10) our method and Brute Force compete with each other, yielding almost the same execution time.

\section{Conclusion}
In this article, we have presented a novel method to optimize a $BIOLFP$ over the efficient set of a $MOILFP$ problem using a branch-and-cut algorithm. The latter reduces the search domain and thus allows us to avoid exploring all efficient sets $ \X_E $ and $ \X_E' $. In addition, the same algorithm can be easily extended in case where there are several utility functions to consider which is a very common real-life case.
The experimental results showed that our proposal yields good execution time for small and medium size problems. Furthermore, when compared with Brute Force method, our proposal proved to be better especially when problem size increases.

For future work, we intend to replace linear fractional  functions with other non-linear functions.

\end{document}